\newtheorem{theorem}{Theorem}[section]
\newtheorem{corollary}[theorem]{Corollary}
\newtheorem{proposition}[theorem]{Proposition}
\theoremstyle{definition}
\theoremstyle{remark}
\begin{document}
\title{On logically cyclic  groups}
\author{\sc M. Shahryari}
\thanks{}
\address{ Department of Pure Mathematics,  Faculty of Mathematical
Sciences, University of Tabriz, Tabriz, Iran }

\email{mshahryari@tabrizu.ac.ir}
\date{\today}

\begin{abstract}
A group $G$ is called logically cyclic, if it contains an element
$s$ such that every element of $G$ can be defined by a first order
formula with parameter $s$. The aim of this paper is to investigate the structure of such groups.
\end{abstract}

\maketitle

{\bf AMS Subject Classification} Primary 20A15, Secondary 03C07 and 03C98.\\
{\bf Key Words} Definability; Elementary extensions; Logically
cyclic groups;  Divisible groups; Quantifier elimination.

\vspace{2cm}

Let $\mathcal{L}=(\cdot, ^{-1}, 1)$ be the language of groups.
Suppose $G$ is a group and $S\subseteq G$. We extend $\mathcal{L}$
to a new language $\mathcal{L}(S)$, by attaching new constant
symbols $a_s$, for any $s\in S$. So we have
$\mathcal{L}(S)=\mathcal{L}\cup\{ a_s: s\in S\}$. Then, $G$ becomes
an $\mathcal{L}(S)$-structure if we let $s$ to be the interpretation of
$a_s$ in $G$. Now, suppose $g\in G$ is an arbitrary element and
there exists a first order formula $\varphi(x)$ in the language
$\mathcal{L}(S)$ (with a free variable $x$), such that
$$
\{ g\}=\{ x\in G:\ G\vDash \varphi(x)\}.
$$
Then we say that $g$ is {\em definable} by the elements of $S$ or
$S$-{\em definable} for short. Let $\mathrm{def}_S(G)$ be the set of
all $S$-definable elements of $G$. Clearly this subset is a
subgroup. If we have $\mathrm{def}_S(G)=G$, then we say that $S$
{\em logically generates} $G$. A {\em logically cyclic} group is a
group which is logically generated by a single element. A cyclic
group is then clearly logically cyclic as every element can be
defined as a power of the generator. The converse is not true, for
example the additive group of rationales, $G=(\mathbb{Q}, +)$, is
logically cyclic as every element $g=m/n$ can be defined by the
formula $nx=m$, which is clearly a first order formula having $s=1$
as a parameter. It is easy to see that all subgroups of $G=(\mathbb{Q}, +)$ are logically cyclic.
In this paper , we  prove that if  a finite group
is logically cyclic, then it is cyclic in the ordinary sense. We also determine all finitely generated
logically cyclic groups as well as those logically cyclic groups  which are torsion-free.

\section{Preliminaries}
We can use the definability theorem of Svenonius to study definable
elements in  groups. For the case of finite groups, a version of this
theorem will be used which can be proved by an elementary argument.
We briefly discuss this well-known result of the model theory. Let
$\mathcal{L}$ be a first order language and $M$ be a structure in
$\mathcal{L}$. An $n$-ary relation $R\subseteq M^n$ is said to be
definable, if there exists a formula $\varphi(x_1, \ldots, x_n)$ in
the language $\mathcal{L}$, such that
$$
R=\{ (x_1, \ldots, x_n)\in M^n:\ M\vDash \varphi(x_1, \ldots,
x_n)\}.
$$
It is easy to see that if $R$ is definable, then every automorphism
of $M$ preserves $R$. To see what can be said about the converse, we
need the concept of {\em elementary extension}. For any
$\mathcal{L}$-structure $M$, suppose $\mathrm{Th}(M)$ is the first
order theory of $M$, i.e. the set of all first order sentences,
which are true in $M$. We say that an $\mathcal{L}$-structure
$M^{\prime}$ is an elementary extension of $M$, if $M$ is a
sub-structure of $M^{\prime}$ and $\mathrm{Th}(M)\subseteq
\mathrm{Th}(M^{\prime})$. We are now ready to review the theorem of
Svenonius.

\begin{theorem}
A relation $R\subseteq M^n$ is definable if and only if every
automorphism of every elementary extension of $M$ preserves $R$.
\end{theorem}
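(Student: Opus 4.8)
The plan is to prove both implications, with the forward one routine and the converse carrying all the weight. First I would dispatch the easy direction: if $R$ is defined in $M$ by an $\mathcal{L}$-formula $\varphi(\bar{x})$, then in any elementary extension $M'\succeq M$ the same formula defines a relation $R'$ with $(M,R)\preceq(M',R')$, and since the truth of $\varphi$ is preserved under any automorphism of $M'$, every such automorphism maps $R'$ onto itself. Thus definability yields invariance essentially for free, and the interest lies entirely in the converse.

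For the converse I would argue by contraposition: assuming $R$ is \emph{not} $\mathcal{L}$-definable, I would manufacture an elementary extension of $M$ carrying an automorphism that moves $R$. Expand the language to $\mathcal{L}_P=\mathcal{L}\cup\{P\}$ with $P$ an $n$-ary symbol, and view $(M,R)$ as an $\mathcal{L}_P$-structure. Introduce two new $n$-tuples of constants $\bar{c},\bar{d}$ and consider the theory consisting of the elementary diagram of $(M,R)$, the sentences $P(\bar{c})$ and $\neg P(\bar{d})$, and all sentences $\varphi(\bar{c})\leftrightarrow\varphi(\bar{d})$ with $\varphi$ ranging over $\mathcal{L}$-formulas. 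The central combinatorial point is that this theory is finitely satisfiable in $(M,R)$ itself: given finitely many $\mathcal{L}$-formulas $\varphi_1,\ldots,\varphi_k$, their Boolean patterns partition $M^n$ into finitely many classes, and were $R$ a union of these classes it would be defined by the corresponding disjunction; since $R$ is not $\mathcal{L}$-definable, some class must contain both a tuple $\bar{c}\in R$ and a tuple $\bar{d}\notin R$, and such a pair realizes the finite fragment. By compactness the full theory has a model, i.e. an $\mathcal{L}_P$-elementary extension $(N,R^N)\succeq(M,R)$ containing $\bar{c}\in R^N$ and $\bar{d}\notin R^N$ with $\mathrm{tp}_{\mathcal{L}}(\bar{c})=\mathrm{tp}_{\mathcal{L}}(\bar{d})$ over the empty parameter set.

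It then remains to convert the $\mathcal{L}$-elementary partial map $\bar{c}\mapsto\bar{d}$ into an automorphism. I would pass to a saturated $\mathcal{L}_P$-elementary extension of $(N,R^N)$; its $\mathcal{L}$-reduct is again an elementary extension of $M$, is itself saturated of the same cardinality, and hence is strongly homogeneous, so the map $\bar{c}\mapsto\bar{d}$ extends to an $\mathcal{L}$-automorphism $\sigma$ of this reduct. Since $\bar{c}\in R^N$ while $\sigma(\bar{c})=\bar{d}\notin R^N$, the automorphism $\sigma$ fails to preserve $R^N$, contradicting the hypothesis applied to this elementary extension of $M$. The main obstacle, and the step demanding the most care, is precisely this last passage: one must secure strong homogeneity of the \emph{$\mathcal{L}$-reduct} (whose automorphisms are free to move $R^N$) while simultaneously retaining the interpretation $R^N$ witnessing $(N,R^N)\succeq(M,R)$. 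An $\mathcal{L}_P$-automorphism would be useless, since it would preserve $R^N$ by construction, so the homogeneity must be arranged at the level of the reduct, which is exactly where the saturation bookkeeping (and, if one wishes to stay within ZFC, the use of special rather than saturated models) enters.
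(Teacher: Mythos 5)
Your argument is correct, and it is the standard proof of Svenonius's theorem: the forward direction by transferring the defining formula along elementary extensions, and the converse by contraposition, applying compactness to the elementary diagram of $(M,R)$ together with $P(\bar c)$, $\neg P(\bar d)$ and the scheme $\varphi(\bar c)\leftrightarrow\varphi(\bar d)$ --- where finite satisfiability is exactly the observation that a non-definable $R$ cannot be a union of cells of the Boolean partition induced by finitely many $\mathcal{L}$-formulas --- and then invoking strong homogeneity of a saturated elementary extension to realize $\bar c\mapsto\bar d$ as an $\mathcal{L}$-automorphism of the reduct that moves the interpretation of $R$. You are also right to flag the one delicate point, namely that the homogeneity must live on the $\mathcal{L}$-reduct rather than the $\mathcal{L}_P$-expansion, and that saturated models need not exist in ZFC, with special models as the standard repair. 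For comparison: the paper does not prove this theorem at all --- it refers the reader to Poizat --- and the only proof it supplies in this section is of the finite special case (Corollary 1.3), by an entirely different and elementary device: one writes a single existential formula asserting the existence of a tuple that enumerates the whole group, reproduces the Cayley table, fixes the parameters in $S$, and puts the free variable in the slot of $g$; any witness to that formula is precisely an automorphism fixing $S$ and carrying $g$ to the candidate element. That Cayley-table argument needs neither compactness nor homogeneous models, but it is confined to finite structures, whereas your argument establishes the full theorem that underlies Corollary 1.2 and the infinite-group results of Section 3.
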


For a proof, the reader can see \cite{Poizat}. Suppose we want to
use this theorem in the case of groups; we must assume that $G$ is a
group, $S\subseteq G$ is an arbitrary subset, and $\mathcal{L}(S)$
is the extended language of groups with parameters from $S$. Clearly
a singleton set $\{ g\}$ is a unary relation in $G$, so we can
restate the above theorem, for $S$-definability of elements in $G$.

\begin{corollary}
An element $g$ is $S$-definable in $G$ if and only if,  for any
elementary extension $G^{\prime}$ of $G$ and every automorphism
$\alpha:G^{\prime}\to G^{\prime}$, if $\alpha$ fixes elements of
$S$, then it fixes also $g$.
\end{corollary}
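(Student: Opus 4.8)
The plan is to obtain the Corollary as a direct specialization of the Svenonius theorem above, applied to $G$ regarded not as a bare group but as a structure in the extended language $\mathcal{L}(S)$. The first step is to observe that the notion of $S$-definability of an element $g$ is literally the definability (in the sense of the Theorem) of the unary relation $\{g\}\subseteq G$ inside the $\mathcal{L}(S)$-structure $G$: a formula $\varphi(x)$ of $\mathcal{L}(S)$ satisfying $\{g\}=\{x\in G:\ G\vDash\varphi(x)\}$ is exactly a defining formula for the relation $R=\{g\}$ with $n=1$. Thus the Theorem applies verbatim and tells us that $g$ is $S$-definable if and only if every automorphism of every elementary extension of the $\mathcal{L}(S)$-structure $G$ preserves $\{g\}$.

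The substance of the argument is then the translation between $\mathcal{L}(S)$-structure automorphisms and the group automorphisms appearing in the statement. By definition, an automorphism of an $\mathcal{L}(S)$-structure $G'$ is a bijection of $G'$ that commutes with the interpretations of all symbols of $\mathcal{L}(S)$. Commuting with the symbols of $\mathcal{L}=(\cdot,\,^{-1},1)$ forces it to be a group automorphism, while commuting with each added constant $a_s$ forces $\alpha(a_s^{G'})=a_s^{G'}$. Since $S\subseteq G\subseteq G'$ and $G$ is a substructure of $G'$, the interpretation $a_s^{G'}$ is the element $s$ itself, so this condition is precisely that $\alpha$ fixes every element of $S$. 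Conversely, any group automorphism of $G'$ fixing $S$ pointwise respects all of $\mathcal{L}(S)$ and so is an $\mathcal{L}(S)$-automorphism. Likewise, $\alpha$ preserves the relation $\{g\}$ if and only if $\alpha(g)=g$.

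Combining these identifications with the conclusion of the Theorem yields exactly the Corollary: $g$ is $S$-definable iff for every elementary extension $G'$ of $G$ and every automorphism $\alpha$ of $G'$ fixing $S$ pointwise one has $\alpha(g)=g$. The only point requiring care, and the step I would make explicit, is the bookkeeping that a structure-automorphism in $\mathcal{L}(S)$ is forced to fix the named constants, i.e. the elements of $S$; once this is spelled out, the Corollary is a restatement of the Theorem with no further work. I would also note that elementary extension in $\mathcal{L}(S)$ agrees with the group-theoretic picture in the statement, since the constants are interpreted by fixed elements of $G$, so that the $G'$ occurring here is indeed an ordinary elementary extension of $G$ with its distinguished constants from $S$.
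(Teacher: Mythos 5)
Your proposal is correct and follows exactly the route the paper takes: the paper also obtains the corollary by viewing $\{g\}$ as a unary relation in the $\mathcal{L}(S)$-structure $G$ and restating Svenonius's theorem, with automorphisms of the expanded structure being precisely group automorphisms fixing $S$ pointwise. Your version merely makes explicit the bookkeeping about constants that the paper leaves implicit.
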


If $G$ is a finite group, then the only elementary extension of $G$
is $G$ itself, because there exists a first order sentence which
says that $G$ has $m$ elements ($m$ is the order of $G$), so any
elementary extension of $G$ must have order $m$. Hence, for the case
of finite groups, we have

\begin{corollary}
An element $g$ is $S$-definable in a finite group $G$ if and only
if, for any  automorphism $\alpha:G\to G$, if $\alpha$ fixes
elements of $S$, then it fixes also $g$.
\end{corollary}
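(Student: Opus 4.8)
The plan is to deduce this statement directly from Corollary 1.2 by showing that, for a finite group, the universal quantification over elementary extensions appearing there collapses to the single structure $G$ itself. Thus the only real work is to justify that $G$ has no proper elementary extension, after which both implications are immediate.

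First I would make precise the claim sketched in the paragraph preceding the statement: a finite group has itself as its only elementary extension. Let $m=|G|$ and consider the $\mathcal{L}(S)$-sentence
$$
\sigma_m:\quad \exists x_1\cdots\exists x_m\Bigl(\bigwedge_{1\le i<j\le m} x_i\neq x_j\ \wedge\ \forall y\bigvee_{i=1}^m y=x_i\Bigr),
$$
which asserts that the universe has exactly $m$ elements. Since $G\vDash\sigma_m$, we have $\sigma_m\in\mathrm{Th}(G)$, so any elementary extension $G^{\prime}$ satisfies $\mathrm{Th}(G)\subseteq\mathrm{Th}(G^{\prime})$ and hence $G^{\prime}\vDash\sigma_m$, forcing $|G^{\prime}|=m$. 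As $G$ is a substructure of $G^{\prime}$ with the same finite cardinality, we conclude $G^{\prime}=G$.

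With this in hand I would finish both directions. For the forward implication, note that if $\alpha:G\to G$ is any automorphism fixing the elements of $S$ (equivalently, fixing the interpretation of every constant $a_s$), then $\alpha$ is precisely an automorphism of the $\mathcal{L}(S)$-structure $G$; taking $G^{\prime}=G$ in Corollary 1.2 shows that $\alpha$ must fix $g$. For the backward implication, suppose $g$ is fixed by every $S$-fixing automorphism of $G$. By the previous step, every elementary extension $G^{\prime}$ equals $G$, so every $S$-fixing automorphism of every elementary extension of $G$ is just an $S$-fixing automorphism of $G$, and by hypothesis fixes $g$. The criterion of Corollary 1.2 is therefore met, and $g$ is $S$-definable.

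The argument involves no genuine obstacle; the only point demanding care is the bookkeeping of languages. One must check that $\sigma_m$, though written in $\mathcal{L}(S)$, indeed lies in $\mathrm{Th}(G)$, and that \emph{fixing the elements of $S$} coincides with being an automorphism of the expanded structure, i.e.\ one respecting each constant symbol $a_s$. Once these identifications are made, the statement is a clean specialization of Svenonius's theorem to the finite case.
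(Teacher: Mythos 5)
Your argument is correct: the reduction of Corollary 1.3 to Corollary 1.2 via the sentence $\sigma_m$ is sound, both implications follow cleanly, and the two bookkeeping points you flag (that $\sigma_m\in\mathrm{Th}(G)$ and that fixing $S$ pointwise is the same as being an automorphism of the $\mathcal{L}(S)$-expansion) are exactly the right ones to check. In fact this is precisely the derivation the paper sketches in the paragraph immediately preceding the corollary. However, the proof the paper actually displays for this statement is a different, self-contained argument that deliberately avoids Svenonius's theorem: it enumerates $G=\{g_1,\dots,g_n\}$ with $g_1=1$, $\{g_2,\dots,g_m\}=S$, $g_{m+1}=g$, records the Cayley table via $g_ig_j=g_{\sigma(i,j)}$, and writes down an explicit existential $\mathcal{L}(S)$-formula $\varphi(y)$ asserting the existence of $n$ distinct elements satisfying the same multiplication table, with the parameters pinned to $S$ and $y$ in position $m+1$. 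Any witness of $\varphi(a)$ then yields an $S$-fixing automorphism $f$ with $f(g)=a$, so the hypothesis forces $a=g$. The paper's route buys an elementary, constructive proof exhibiting the defining formula and independent of the full definability theorem; your route is shorter but imports the strength of Corollary 1.2 (Svenonius), whose proof is the genuinely hard part. Both are valid proofs of the statement.
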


Note that this very special case of Svenonius's theorem can be
proved independently by an elementary argument. Here is the proof.

\begin{proof} Let $g\in G$ be invariant under every automorphism
which fixes elements of $S$. Let
$$
G=\{ g_1, g_2, \ldots, g_n\}
$$
be an enumeration of the elements of $G$ such that $g_1=1$, $\{g_2,
\ldots, g_m\}=S$ and $g_{m+1}=g$. Consider the Cayley table of $G$,
i.e. determine the unique numbers $\sigma(i,j)$ such that
$g_ig_j=g_{\sigma(i,j)}$. Now we introduce a formula $\varphi(y)$ in
the language $\mathcal{L}(S)$ as
\begin{eqnarray*}
\exists x_1, \ldots, x_n&:& (\bigwedge_{i\neq j}x_i\neq
x_j)\wedge((\bigwedge_{i=2}^mx_i=g_i)\wedge x_1=1)\wedge
(\bigwedge_{i,j}x_ix_j=x_{\sigma(i,j)})\\
&\ & \wedge(y=x_{m+1}).
\end{eqnarray*}

We show that $\varphi(y)$ defines $g$. Let $a\in G$ be an element
such that $G\models \varphi(a)$. Therefore there exist distinct
elements $b_1, \ldots, b_n\in G$ such that\\

1- $b_1=1$ and $b_2=g_2, \ldots, b_m=g_m$.

2- $b_{m+1}=a$.

3- $b_ib_j=b_{\sigma(i,j)}$.\\

Now, the map $f:G\to G$ defined by $f(g_i)=b_i$ is an automorphism,
and for all $g_i\in S$ we have $f(g_i)=b_i=g_i$. So, $f$ must
preserve $g$. Hence, we have $g=f(g)=f(g_{m+1})=b_{m+1}=a$. This
completes the proof.
\end{proof}

An automorphism $\alpha:G\to G$ is said to be an $S$-automorphism,
if it fixes $S$ elementwise. If we work in the semidirect product $\hat{G}=\mathrm{Aut}(G)\ltimes G$,
then the set of all $S$-automorphisms of $G$ is just the centralizer $C_{\mathrm{Aut}(G)}(S)$.
We will use this type of centralizer notation in the rest of the article.
So, for an arbitrary group, we have
$$
\mathrm{def}_S(G)\subseteq C_G(A),
$$
where $A=C_{\mathrm{Aut}(G)}(S)$.
If $G$ is finite, then by the corollary 1.3, we have the equality
$$
\mathrm{def}_S(G)= C_G(A).
$$

Suppose $G$ is logically cyclic. So $G=\mathrm{def}_s(G)$ for some
$s$. This shows that $G=C_G(A)$, therefore we have the implication
$$
\forall \alpha\in \mathrm{Aut}(G):\ \alpha(s)=s\Rightarrow
\alpha=\mathrm{id}_G.
$$
For finite groups, 1.3 implies that the converse is also true, i.e.
if there exists an element $s$ satisfying the above implication,
then $G$ is logically cyclic. We  prove that every logically cyclic
group is abelian. Note that a similar argument shows that for any
group $G$ and every element $s\in G$, the subgroup
$\mathrm{def}_s(G)$ is also abelian (see also the discussion at the end of the next section). It is also possible to prove that, if a group $G$ is logically generated by a commuting set of elements $S$, then it is Abelian.

\begin{proposition}
Every logically cyclic group is abelian.
\end{proposition}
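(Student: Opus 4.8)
The plan is to exploit the implication already derived in the preceding discussion for a logically cyclic group $G$ with parameter $s$, namely that every automorphism of $G$ fixing $s$ must be the identity, and to feed the inner automorphisms of $G$ into this implication. First I would recall that for $h\in G$ the inner automorphism $\mathrm{conj}_h\colon x\mapsto hxh^{-1}$ fixes $s$ precisely when $h\in C_G(s)$, while $\mathrm{conj}_h=\mathrm{id}_G$ precisely when $h\in Z(G)$. Applying the implication to $\alpha=\mathrm{conj}_h$ for each $h\in C_G(s)$ therefore yields at once the containment $C_G(s)\subseteq Z(G)$.

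The second step is to note that the reverse containment is automatic, since a central element commutes with everything and in particular with $s$, so $Z(G)\subseteq C_G(s)$; hence $C_G(s)=Z(G)$. The decisive observation is then that $s$ lies in its own centralizer, so that $s\in C_G(s)=Z(G)$, i.e. $s$ is central. But once $s$ is known to be central, every element of $G$ commutes with $s$, which forces $C_G(s)=G$. Comparing the two descriptions of $C_G(s)$ gives $G=C_G(s)=Z(G)$, which is exactly the assertion that $G$ is abelian.

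I expect the only genuine obstacle to be conceptual rather than computational: one has to recognise that the right automorphisms to test against the implication are the inner ones, and that the seemingly innocuous fact $s\in C_G(s)$ is precisely what collapses the centralizer to the whole group. Everything else is a short chain of containments requiring no quantifier manipulation. Finally I would remark that the very same reasoning, applied to $\mathrm{def}_s(G)$ in place of $G$ and using that every $s$-automorphism fixes all of $\mathrm{def}_s(G)$, shows more generally that $\mathrm{def}_s(G)$ is abelian for an arbitrary element $s$, as foreshadowed just before the statement.
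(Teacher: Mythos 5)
Your proof is correct and is essentially the paper's own argument: both feed inner automorphisms into the implication ``$\alpha(s)=s\Rightarrow\alpha=\mathrm{id}_G$,'' first deducing from $s\in C_G(s)$ that $s$ is central and then that every element is central. Your packaging of the two steps as the single containment $C_G(s)\subseteq Z(G)$ followed by $C_G(s)=G$ is just a tidier phrasing of the same idea.
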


\begin{proof}
Let $G=\mathrm{def}_s(G)$. Then as above
$$
\forall \alpha\in \mathrm{Aut}(G):\ \alpha(s)=s\Rightarrow
\alpha=\mathrm{id}_G.
$$
So, considering the inner automorphism $\alpha: x\mapsto sxs^{-1}$,
we obtain $s\in Z(G)$. Now, let $g\in G$ be an arbitrary element and
let $\beta: x\mapsto gxg^{-1}$. Since $[g, s]=1$, so $\beta(s)=s$
and therefore $\beta=\mathrm{id}_G$. This shows that $g\in Z(G)$ and
hence $G$ is abelian.
\end{proof}

One may ask this question: If a group $G$ is logically generated by a set $S$ and $\langle S\rangle$ is nilpotent, can we prove that $G$ is nilpotent? \\
Note that if every element of $G$ is definable in the language of
groups, $\mathcal{L}$, then we must have
$$
G=C_G(\mathrm{Aut}(G)),
$$
and in this case we have $\mathrm{Aut}(G)=1$, which shows that $G=1,
\mathbb{Z}_2$. So, the groups $1$ and $\mathbb{Z}_2$ are the only
groups, every element in which, is definable in the language of
groups.

As we saw, if a  group $G$, is
logically cyclic, then there exists an element $s\in G$
such that for all non-identity automorphism $\alpha$, we have
$\alpha(s)\neq s$. In this case, if we consider the action of
$\mathrm{Aut}(G)$ on $G$, then
$$
|\mathrm{Orb}(s)|=|\mathrm{Aut}(G)|,
$$
so, for  logically cyclic groups, we have
$$
|\mathrm{Aut}(G)|\leq |G|.
$$

\section{The case of finite groups}
We are now ready to prove one of our main theorems.

\begin{theorem}
Let $G$ be a finite logically cyclic group. Then $G$ is cyclic.
\end{theorem}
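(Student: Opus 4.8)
The plan is to prove the contrapositive after two reductions supplied by the preliminaries. First, by Proposition 1.4 a logically cyclic group is abelian, and by Corollary 1.3 (the finite case) an element $g$ is $\{s\}$-definable precisely when every automorphism fixing $s$ also fixes $g$. Hence a finite $G$ is logically cyclic exactly when there is an $s\in G$ whose stabilizer in $\mathrm{Aut}(G)$ is trivial, i.e. the only automorphism with $\alpha(s)=s$ is the identity. So it suffices to show: if a finite abelian group $G$ is \emph{not} cyclic, then for every $s\in G$ there is a nontrivial automorphism fixing $s$; this denies the existence of any $s$ with trivial stabilizer and forces $G$ to be cyclic.

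Second, I would exploit non-cyclicity through a single prime. A finite abelian group is non-cyclic iff some prime $p$ divides at least two of its invariant factors, equivalently the socle $G[p]=\{x\in G: x^p=1\}$ satisfies $r:=\dim_{\mathbb{F}_p}G[p]\ge 2$. Everything else is linear algebra over the field $\mathbb{F}_p$ with $p$ elements. Since any homomorphism $G\to\mathbb{Z}/p\mathbb{Z}$ kills $G^p$, the group $\mathrm{Hom}(G,\mathbb{Z}/p\mathbb{Z})$ is dual to $G/G^p$ and has dimension $r$. I would first pick $\chi\in\mathrm{Hom}(G,\mathbb{Z}/p\mathbb{Z})$ with $\chi\neq 0$ and $\chi(s)=0$: the evaluation $\chi\mapsto\chi(s)$ is a linear functional on an $r$-dimensional space, so its kernel has dimension $\ge r-1\ge 1$. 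Restricting such a $\chi$ to $G[p]$ gives a functional on an $r$-dimensional space, whose kernel again has dimension $\ge r-1\ge 1$; I choose a nontrivial $t\in G[p]$ there, so that $t^p=1$, $t\neq 1$, and $\chi(t)=0$.

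With this data I would define $\alpha\colon G\to G$ by $\alpha(x)=x\,t^{\chi(x)}$, which is well defined because $t^p=1$. Since $G$ is abelian, $\alpha(xy)=xy\,t^{\chi(x)+\chi(y)}=\alpha(x)\alpha(y)$, so $\alpha$ is an endomorphism; because $\chi(t)=0$, the map $x\mapsto x\,t^{-\chi(x)}$ is a two-sided inverse, so $\alpha\in\mathrm{Aut}(G)$. It fixes $s$ since $\chi(s)=0$ gives $\alpha(s)=s\,t^{0}=s$, and it is nontrivial because $\chi\neq 0$ yields some $y$ with $\chi(y)\not\equiv 0\pmod p$, whence $\alpha(y)=y\,t^{\chi(y)}\neq y$ as $t$ has order exactly $p$. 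This is the automorphism required by the contrapositive, which completes the argument.

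I expect the only genuine work to be the verification in the third paragraph: that the prescribed $\alpha$ is a true automorphism (the relation $\chi(t)=0$ is exactly what makes $\alpha$ a transvection-type, hence invertible, map) together with the bookkeeping that both $\chi$ and $t$ can be chosen with their vanishing conditions once $r\ge 2$. An alternative route is suggested by the bound $|\mathrm{Aut}(G)|\le |G|$ recorded just before the theorem: after reducing to a $p$-group one could instead derive a contradiction by proving $|\mathrm{Aut}(G)|>|G|$ for every non-cyclic finite abelian $p$-group. I would avoid that path here, since extracting the inequality from the Hillar--Rhea order formula is more computational than the explicit construction above.
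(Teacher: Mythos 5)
Your proof is correct, and it takes a genuinely different route from the paper's. The paper also begins by reducing to finite abelian groups and to the criterion ``there exists $s$ whose stabilizer in $\mathrm{Aut}(G)$ is trivial,'' but it then decomposes $G$ into its Sylow factors, proves that direct factors of a logically cyclic finite group are logically cyclic, reduces to $\mathbb{Z}_{p^{e_1}}\times\mathbb{Z}_{p^{e_2}}$, and rules this out by a case analysis on the Hillar--Rhea formula for $|\mathrm{Aut}(G)|$ against the bound $|\mathrm{Aut}(G)|\le|G|$ --- exactly the computational path you flagged and avoided. Your transvection construction $\alpha(x)=x\,t^{\chi(x)}$ replaces all of that with one explicit nontrivial automorphism fixing an arbitrary $s$, and the linear algebra over $\mathbb{F}_p$ (the kernels of $\chi\mapsto\chi(s)$ and of $\chi|_{G[p]}$ each having dimension at least $r-1\ge 1$) is carried out correctly, as is the verification that $\chi(t)=0$ makes $x\mapsto x\,t^{-\chi(x)}$ a two-sided inverse. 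What your approach buys is a shorter, self-contained, and more conceptual argument that needs neither the reduction to direct factors nor any external automorphism-counting formula; what the paper's approach buys is explicit quantitative information about $|\mathrm{Aut}(G)|$ for the candidate groups, which ties in with the inequality $|\mathrm{Aut}(G)|\le|G|$ that the author reuses in the infinite case (Proposition 3.2). The one point worth making explicit if you write this up is the direction of Corollary 1.3 you actually use: logical cyclicity gives some $s$ with trivial stabilizer (this already follows from $\mathrm{def}_s(G)\subseteq C_G(A)$), and your construction contradicts that for every $s$ when $G$ is non-cyclic abelian.
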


\begin{proof}
As we said before, $G$ is abelian and so it is a direct product of
abelian $p$-groups of the form
$$
H_p=\mathbb{Z}_{p^{e_1}}\times \cdots\times\mathbb{Z}_{p^{e_t}},
$$
where $p$ is a prime (ranging in the set of all prime divisors of
$|G|$) and $1\leq e_1\leq \cdots\leq e_t$ are depending on $p$. Note
that if a finite group $G=G_1\times G_2$ is logically cyclic, then
both $G_1$ and $G_2$ are logically cyclic. This is because, if for
example $G_1$ is not logically cyclic, then (by 1.3) for all $s_1\in
G_1$, there exists a non-identity automorphism $\varphi_1\in
\mathrm{Aut}(G_1)$, such that $\varphi_1(s_1)=s_1$. Hence for all
$(s_1, s_2)\in G$, there exists the non-identity $(\varphi_1,
\mathrm{id}_{G_2})\in \mathrm{Aut}(G)$, such that
$$
(\varphi_1, \mathrm{id}_{G_2})(s_1, s_2)=(s_1, s_2),
$$
and this violates the logically cyclicity of $G$. The converse is
also true if the orders of $G_1$ and $G_2$ are co-prime, since, in
this case we have
$$
\mathrm{Aut}(G)=\mathrm{Aut}(G_1)\times \mathrm{Aut}(G_2).
$$

This argument shows that it is enough to assume that $G$ has the
form
$$
\mathbb{Z}_{p^{e_1}}\times \cdots\times\mathbb{Z}_{p^{e_t}}.
$$
By \cite{Hillar}, the order of $\mathrm{Aut}(G)$ can be computed as
follows. Let
$$
d_i=\max\{ j:\ e_j=e_i\}, \ c_i=\min\{ j:\ e_j=e_i\}.
$$
Then we have
$$
|\mathrm{Aut}(G)|=\prod_{i=1}^t(p^{d_i}-p^{i-1})p^{e_i(t-d_i)+(e_i-1)(t-c_i+1)}.
$$
Suppose $t\geq 2$. Since $G$ is logically cyclic, so by the above
observation, $A=\mathbb{Z}_{p^{e_1}}\times\mathbb{Z}_{p^{e_2}}$ is
logically cyclic. We compute the order of $\mathrm{Aut}(A)$, using
the above formula. Note that, in the case of the group $A$, we have
$$
1\leq d_1\leq 2,\ d_2=2,\ c_1=1,\ 1\leq c_2\leq 2.
$$
So, we have
$$
|\mathrm{Aut}(A)|=(p^{d_1}-1)(p-1)p^{4e_1+3e_2-d_1e_1-c_2e_2+c_2-4}.
$$
Applying the requirement $|\mathrm{Aut}(A)|\leq |A|$, we obtain
$$
(p^{d_1}-1)(p-1)p^{(3-d_1)e_1+(2-c_2)e_2+c_2}\leq p^4,
$$
so we can consider some possibilities for $d_1$ and $d_2$.\\

1- First, note that the case $d_1=1$ and $c_2=1$ is impossible.\\

2- If $d_1=1$ and $c_2=2$, then we have
$$
(p^1-1)(p-1)p^{3e_1+2e_2-e_1-2e_2+2}\leq p^4,
$$
and hence
$$
(p-1)^2p^{2e_1+2}\leq p^4.
$$
Now the case $e_1>1$ is impossible and hence $e_1=1$. This shows
that $p=2$ and hence $A=\mathbb{Z}_2\times \mathbb{Z}_{2^f}$ for
some $f\geq 2$. It is easy to see that $|\mathrm{Aut}(
\mathbb{Z}_2\times \mathbb{Z}_{2^f})|=2^{f+1}$ and therefore the
whole the  group $A=\mathbb{Z}_2\times \mathbb{Z}_{2^f}$ must be the
orbit of $s$ under the action of its automorphism group, which is
not the case.
So, we get a contradiction.\\

3- Let $d_1=2$ and $c_2=1$. This shows that $e_1=e_2$, and hence
$$
(p^2-1)(p-1)p^{2e_1+1}\leq p^4.
$$
Again, the case $e_1>1$ is impossible and the case $e_1=1$ implies
$(p^2-1)(p-1)\leq p$ which is contradiction.\\

4- Finally, note that the case $d_1=2$ and $c_2=2$ is also
impossible.\\

This argument shows that in all the cases,  $t\geq 2$ is not valid.
So $G$ is a direct product of cyclic groups of co-prime orders and
hence it is cyclic.

\end{proof}

A caution is necessary here: the subgroup $\mathrm{def}_s(G)$ is
strongly dependent to $G$. If we are not careful about this
dependence, we may obtain wrong conclusions. As an example, let
$H=\mathrm{def}_s(G)$. Since every element of $H$ is definable by
the parameter $s$, so one may concludes that $H$ is logically
cyclic. In the other words, one may convince that by 2.1, for any
finite group $G$ and any $s\in G$ the group
$C_G(C_{\mathrm{Aut}(G)}(s))$ is cyclic. This is not true, since for
example, if we let $G$ be a $p$-group of class 2 with an odd $p$,
and if we assume that  $\mathrm{Aut}(G)$ is also $p$-group such that
$\Omega_1(G)$ is not included in the center, then we can choose $s$
to be a non-central element of order $p$ and $u\in
C_{\mathrm{Aut}(G)}(G)\cap Z(G)$ with order $p$. Now, it is easy to
see that $\langle s, u\rangle\subseteq C_G(C_{\mathrm{Aut}(G)}(s))$,
and so this subgroup is not cyclic. Note that for the case $p=2$,
the dihedral group of order 8 is also a counterexample. These
counterexamples show that in general $\mathrm{def}_s(G)$ is not
logically cyclic. To see the reason, note that if $H\leq G$ and
$s\in H$, then there is no trivial relations between
$\mathrm{def}_s(G)$ and $\mathrm{def}_s(H)$. If $g\in
\mathrm{def}_s(H)$, and $\varphi(x)$ is a formula defining $g$ in
$H$, then we may have
$$
|\{ a\in G:\ G\vDash \varphi(a)\}|>1,
$$
or even, we may have $G\vDash \neg \varphi(a)$. This shows that in
general $\mathrm{def}_s(H)$ is not contained in $\mathrm{def}_s(G)$.
On the other hand, if $g\in H\cap \mathrm{def}_s(G)$, then we may
have not $g\in \mathrm{def}_s(H)$ by a similar argument. Hence, the
subgroup $\mathrm{def}_s(G)$ behaves not so simply despite its
abelianity. In some cases, the subgroup $\mathrm{def}_s(G)$ is also logically cyclic, for example if $G$ is a divisible Abelian group. To see this, one may use the quantifier elimination property of divisible Abelian group and a similar argument as in the proof of Theorem 3.3 below.

\section{The case of infinite groups}
In this section, we will determine the structure of logically cyclic groups for the following cases:\\

1- Finitely generated groups,

2- Divisible groups,

3- Torsion-free groups.\\

Suppose $G=\mathrm{def}_s(G)$ is a logically cyclic group and $H=\langle s\rangle$. Then as we saw in the introduction, the subgroup $H$ is an $\mathrm{Aut}$-basis of $G$ in the sense of \cite{Gio} and \cite{Gio2}. This means that every automorphism of $G$ is uniquely determined by its action on $H$. So, as it is proved in \cite{Gio}, we have
$$
\mathrm{Hom}(G/H, H)=0.
$$
Applying results of \cite{Gio2} concerning finite $\mathrm{Aut}$-bases, we can collect the following facts about the group $G$.\\

1- If $G$ is periodic, then it is finite so by the previous section it is cyclic.

2- If $G$ is periodic by finitely generated, then it is finitely generated.

3- $\mathrm{Aut}(G)$ is countable. Note that this is also a result of the inequality $|\mathrm{Aut}(G)|\leq |G|$, which is valid for all logically cyclic groups.

4- If the order of $s$ is finite then $G$ is finite and so it is cyclic.

5- The quasi-cyclic groups $\mathbb{Z}_{p^{\infty}}$ are not logically cyclic as well as the additive group $\mathbb{Q}/\mathbb{Z}$.

We first determine the structure of all finitely generated logically cyclic groups.

\begin{proposition}
Let $G$ be a logically cyclic finitely generated group. Then $G$ is cyclic or $G=\mathbb{Z}\times \mathbb{Z}_2$.
\end{proposition}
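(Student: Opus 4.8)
The plan is to reduce to the structure theorem for finitely generated abelian groups and then analyse the automorphism group directly. Since every logically cyclic group is abelian, $G$ is finitely generated abelian, so $G \cong \mathbb{Z}^r \times T$ with $T$ a finite abelian group and $r \geq 0$. If $r = 0$ then $G$ is finite, and Theorem 2.1 gives that $G$ is cyclic. For $r \geq 1$ I would use the necessary condition recorded in the introduction: if $G$ is logically cyclic there is an $s \in G$ whose stabilizer in $\mathrm{Aut}(G)$ is trivial. The whole argument then consists of locating such an $s$ inside $\mathbb{Z}^r \times T$ and showing it can exist only when $r = 1$ and $T \in \{1, \mathbb{Z}_2\}$ (the case $T = 1$ giving $\mathbb{Z}$), or when $r = 0$.

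The tool is the description of $\mathrm{Aut}(\mathbb{Z}^r \times T)$. Because $T$ is the torsion subgroup (hence characteristic) and $\mathrm{Hom}(T, \mathbb{Z}^r) = 0$, every automorphism is block-triangular, given by a triple $(A, \phi, B)$ with $A \in GL_r(\mathbb{Z})$ acting on $\mathbb{Z}^r$, $\phi \in \mathrm{Aut}(T)$ acting on $T$, and $B \in \mathrm{Hom}(\mathbb{Z}^r, T)$, so that $\mathrm{Aut}(G) \cong (GL_r(\mathbb{Z}) \times \mathrm{Aut}(T)) \ltimes \mathrm{Hom}(\mathbb{Z}^r, T)$. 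For $r \geq 2$ I would rule $G$ out immediately: given any $s = (v, t)$, the automorphism $(A, \mathrm{id}_T, 0)$ fixes $s$ precisely when $Av = v$, and for $r \geq 2$ the stabilizer of every vector $v \in \mathbb{Z}^r$ in $GL_r(\mathbb{Z})$ is nontrivial (if $v = 0$ this is clear, and if $v \neq 0$ one extends a primitive multiple of $v$ to a basis and uses a shear). Hence no $s$ has trivial stabilizer and $G$ is not logically cyclic.

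This leaves $r = 1$, where $G = \mathbb{Z} \times T$ and an automorphism is a triple $(\epsilon, \phi, b)$ with $\epsilon \in \{\pm 1\}$ and $b \in T$, acting by $(n, t) \mapsto (\epsilon n,\, nb + \phi(t))$. Writing $s = (n, t)$, a direct computation of the stabilizer equations $\epsilon n = n$ and $nb + \phi(t) = t$ shows first that $n \neq 0$ (otherwise $(-1, \mathrm{id}, 0)$ fixes $s$), and then that $\epsilon = 1$. Taking $\phi = \mathrm{id}$ forces $nb = 0$, so to avoid a nontrivial stabilizer element we must have $\gcd(n, |T|) = 1$, i.e. multiplication by $n$ is bijective on $T$. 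The crux of the argument — the step I expect to be the main obstacle — is what happens next: precisely because multiplication by $n$ is then invertible on $T$, for every $\phi \in \mathrm{Aut}(T)$ the equation $nb = t - \phi(t)$ has a unique solution $b$, so each $\phi$ extends to a stabilizing automorphism. Thus the stabilizer of $s$ has order exactly $|\mathrm{Aut}(T)|$, and it is trivial if and only if $\mathrm{Aut}(T) = 1$, that is $T = 1$ or $T = \mathbb{Z}_2$. This is the tension to make explicit: the coprimality needed to kill the ``$b$-only'' elements is exactly what makes all of $\mathrm{Aut}(T)$ realizable in the stabilizer.

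Finally I would confirm that the two surviving groups really are logically cyclic, so that the list is sharp. The group $\mathbb{Z}$ is cyclic, hence logically cyclic. For $\mathbb{Z} \times \mathbb{Z}_2$ I would exhibit the definitions directly: $(0, \bar{1})$ is the unique element of order $2$ and so is definable without parameters, while with the parameter $s = (1, \bar{0})$ every $(m, \bar{0})$ is the term $ms$ and every $(m, \bar{1})$ is $ms$ added to the already-definable order-two element. Hence $\mathbb{Z} \times \mathbb{Z}_2$ is logically cyclic, completing the classification.
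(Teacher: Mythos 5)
Your proof is correct, and while it follows the same broad outline as the paper (reduce to $\mathbb{Z}^r\times T$ via abelianity, kill $r\ge 2$, analyse the rank-one case through the block-triangular description of the automorphism group), the two key steps are carried out differently. To exclude $r\ge 2$ the paper invokes the $\mathrm{Aut}$-basis machinery of the cited references, namely that $\mathrm{Hom}(G/\langle s\rangle,\langle s\rangle)=0$, and derives a contradiction from a nonzero homomorphism $\mathbb{Z}^{r-1}\oplus T\to\mathbb{Z}$; you instead produce, for any $v\in\mathbb{Z}^r$, an explicit nontrivial shear in $GL_r(\mathbb{Z})$ fixing $v$ (extending a primitive multiple of $v$ to a basis), which is self-contained and avoids the external results. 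In the rank-one case the paper first reduces the torsion factor to $\mathbb{Z}_m$ by applying Corollary 1.3 and Theorem 2.1 to the direct factor, and then runs a two-case analysis on $\gcd(m,u)$, choosing $a\ne 1$ coprime to $m$ in one case and $b=m/d$ in the other; your computation subsumes both cases at once for an arbitrary finite abelian $T$, by showing that the coprimality of $n$ with $|T|$ needed to kill the translation-only stabilizers is exactly what makes every $\phi\in\mathrm{Aut}(T)$ lift to a stabilizing automorphism, so the stabilizer has order precisely $|\mathrm{Aut}(T)|$ and triviality forces $T\in\{1,\mathbb{Z}_2\}$. This is a genuine simplification: it removes the need to know in advance that $T$ is cyclic and replaces the paper's case split with a single counting identity, at the mild cost of quoting the elementary fact that a finite abelian group with trivial automorphism group is $1$ or $\mathbb{Z}_2$. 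Your concluding verification that $\mathbb{Z}\times\mathbb{Z}_2$ is logically cyclic matches the paper's.
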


\begin{proof}
We have $G=\mathbb{Z}^n\oplus A$ for some finite group $A$. It is easy to apply Corollary 1.3 to see that $A$ is also logically cyclic, so by the previous section $A=\mathbb{Z}_m$, for some $m$. Now, suppose $n>1$ and $H=\langle s\rangle$. Since $s$ has infinite order by the fact 4, we have $G/H=\mathbb{Z}^{n-1}\oplus A$ and hence there exists a non-zero homomorphism $G/H \to H$, contradicting the fact $\mathrm{Hom}(G/H, H)=0$. Therefore  $n\leq 1$. Now, suppose that $G$ is not cyclic. We show that $m=2$. Note that the group $G=\mathbb{Z}\times \mathbb{Z}_2$ is actually a logically cyclic group. To see this, we  show that $G$ is logically generated by $s=(1,0)$. Clearly all elements of the form $(u,0)$ can be defined by $x=us$, so consider an element of the form $(u,1)$. Since we have $(u,1)=us+(0,1)$ and $(0,1)$ is the only element of order 2 in the whole group, so we can define $(u,1)$ by the formula
$$
     \forall y\ ((2y=0 \wedge y\neq 0) \Rightarrow x=us+y).
$$
Suppose now, $m\geq 3$ and $s=(u,v)$ is a logical generator of $G$. Note that $s$ can not be of the form $(u,0)$, because in this case we can fix a non-identity automorphism $\alpha\in \mathrm{Aut}(\mathbb{Z}_m)$ (as we let $m\geq 3$) and then the non-identity automorphism $(\mathrm{id}, \alpha)$ will fix $s$, which is impossible. Also it is impossible to have $s=(0,v)$, since in this case $o(s)$ is finite and this implies that $G$ is also finite by the fact 4 above. 

Recall that every endomorphism  $f:G\to G$ can be represented as a matrix 
$$
M=\left [
\begin{array}{cc}
 \lambda_n& 0\\
               \gamma_b& \eta_a
             \end{array}
  \right ], 
$$
where $\lambda_n:\mathbb{Z}\to \mathbb{Z}$ is defined by $\lambda_n(x)=nx$, $\gamma_b:\mathbb{Z}\to \mathbb{Z}_m$ is defined by $\gamma_b(x)=bx\ \ (\mathrm{mod}\ m)$, and $\eta_a:\mathbb{Z}_m\to \mathbb{Z}_m$ is defined by $\eta_a(x)=ax\ \ (\mathrm{mod}\ m)$. We know that this is an automorphism iff the matrix $M$ is invertible and this happens just in the case $(a, m)=1$. Note that also $M$ represents the identity iff $a=1$ and $m$ divides $b$. We first investigate the case when $m$ and $u$ are co-prime. Choose $a\neq 1$ co-prime to $m$ (this is possible as we assumed that $m\geq 3$). Then there is an integer $b$ such that
$$
bu+(a-1)v \equiv 0\ \ (\mathrm{mod}\ m).
$$
So, consider the automorphism 
$$
M=\left [
\begin{array}{cc}
 \mathrm{id}& 0\\
               \gamma_b& \eta_a
             \end{array}
  \right ].
$$
We have
$$
Ms=\left [
\begin{array}{cc}
 \lambda_n& 0\\
               \gamma_b& \eta_a
             \end{array}
\right ]
\left [
\begin{array}{c}
u\\
v
\end{array}
\right ]
=
\left [
\begin{array}{c}
u\\
bu+av
\end{array}
\right ]
=
\left [
\begin{array}{c}
u\\
v
\end{array}
\right ]=s.
$$
This shows that $s$ can not be a logical generator of $G$. So, we have $d=(m, u)>1$. Now, put $a=1$ and $b=m/d$ and consider again the automorphism $M$. This is a non-identity automorphism as $m$ does not divide $m/d$. It is easy now to see $Ms=s$, a contradiction.
\end{proof}

We can use a similar argument as above to show that if $G$ is a torsion-free logically cyclic group, then so is $G\times \mathbb{Z}_2$. To see this, 
first we show that $\mathbb{Q}\times \mathbb{Z}_2$ is  logically generated by $s=(1,0)$. Clearly every element of the form $(m/n, 0)$ can be defined by $nx=ms$, so consider the element $(m/n, 1)$. This element can be defined by
$$
\forall y \forall z \ ((2y=0 \wedge y\neq 0 \wedge nz=ms) \Rightarrow x=y+z).
$$
Now, we can apply this observation and  the theorem 3.3 below to prove the general case.\\

The next proposition, shows that the additive group of rationals is the only divisible logically cyclic group.

\begin{proposition}
Let $G$ be a non-trivial divisible logically cyclic group. Then $G=(\mathbb{Q}, +)$.
\end{proposition}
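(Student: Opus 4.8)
The plan is to combine the structure theorem for divisible abelian groups with the necessary condition for logical cyclicity recorded in the introduction, namely that if $G=\mathrm{def}_s(G)$ then no nonidentity automorphism of $G$ can fix $s$. By Proposition 1.4 the group $G$ is abelian, and being divisible it decomposes as $G=V\oplus T$, where $V$ is a $\mathbb{Q}$-vector space (the torsion-free divisible part) and $T=\bigoplus_p T_p$ is the torsion subgroup, each $T_p$ a direct sum of copies of $\mathbb{Z}_{p^{\infty}}$; here we use that the divisible, hence injective, subgroup $T$ is a direct summand. Fix a logical generator $s$ and write $s=v+t$ with $v\in V$ and $t\in T$. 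The goal is to prove $T=0$ and $\dim_{\mathbb{Q}}V=1$, which forces $G\cong\mathbb{Q}$; the converse (that $\mathbb{Q}$ is logically cyclic) is already established in the introduction.

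First I would eliminate the torsion. The crucial point is the following: for any nontrivial divisible torsion abelian group $T$ and any element $t\in T$, the stabilizer of $t$ in $\mathrm{Aut}(T)$ is nontrivial. Granting this, choose a nonidentity $\beta\in\mathrm{Aut}(T)$ with $\beta(t)=t$; then $\mathrm{id}_V\oplus\beta$ is a nonidentity automorphism of $G$ fixing $s=v+t$, contradicting logical cyclicity. Hence $T=0$. To prove the stabilizer claim, pick a prime $q$ with $T_q\neq 0$. If $T_q=\mathbb{Z}_{q^{\infty}}$, then $\mathrm{Aut}(T_q)\cong\mathbb{Z}_q^{\times}$ acts by multiplication, and letting $q^k$ be the order of the $T_q$-component of $t$, every $q$-adic unit $u\equiv 1\ (\mathrm{mod}\ q^k)$ fixes that component while $u=1+q^k\neq 1$ gives a nonidentity automorphism; if $T_q$ has rank at least two, split off a copy of $\mathbb{Z}_{q^{\infty}}$ containing the $q$-component of $t$ and act nontrivially on another copy. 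Either way we obtain the required nonidentity automorphism fixing $t$ and acting as the identity off $T_q$.

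It then remains to treat the torsion-free case $G=V$, a $\mathbb{Q}$-vector space, with $s\neq 0$ (if $s=0$ then $\mathrm{def}_s(G)=C_G(\mathrm{Aut}(G))=0\neq G$, since multiplication by $2$ already fixes only $0$). Suppose $\dim_{\mathbb{Q}}V\geq 2$ and write $V=\mathbb{Q}s\oplus W$ with $W\neq 0$. The map acting as the identity on $\mathbb{Q}s$ and as multiplication by $2$ on $W$ is a nonidentity automorphism of $G$ fixing $s$, again contradicting logical cyclicity. Therefore $\dim_{\mathbb{Q}}V=1$ and $G\cong\mathbb{Q}$.

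I expect the main obstacle to be the torsion-elimination step, specifically the stabilizer lemma: one must produce a genuinely nontrivial automorphism of $T$ that pins down a prescribed torsion element, and this requires a short case analysis on the $q$-rank of $T_q$ together with the explicit description of $\mathrm{Aut}(\mathbb{Z}_{q^{\infty}})$. By contrast, once $T=0$ the rank reduction is immediate from the abundance of $s$-fixing automorphisms of a vector space. One should also note that throughout we invoke only the necessary condition ``no nonidentity automorphism fixes $s$,'' which holds for every logically cyclic group and suffices to derive each contradiction; we never need the harder sufficient direction involving elementary extensions.
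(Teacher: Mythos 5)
Your proof is correct and reaches the same conclusion, but it replaces the paper's key step for killing the torsion part by a genuinely different and more elementary argument. The paper rules out quasi-cyclic summands by a cardinality count: it cites the result (from the references on $\mathrm{Aut}$-bases) that $\mathrm{Aut}(G)$ is countable for a logically cyclic group, and observes that $\mathrm{Aut}(\mathbb{Z}_{p^{\infty}})$, being the group of $p$-adic units, is uncountable and embeds in $\mathrm{Aut}(G)$. You instead exhibit an explicit nonidentity automorphism of the torsion part fixing the torsion component $t$ of $s$ (multiplication by a unit $u\equiv 1 \pmod{q^k}$ on a copy of $\mathbb{Z}_{q^{\infty}}$, or action on a complementary summand when the $q$-rank is at least $2$), and then extend it by the identity on the torsion-free part. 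This buys self-containedness: you invoke only the elementary necessary condition that no nonidentity automorphism fixes $s$, and you avoid any appeal to countability of $\mathrm{Aut}(G)$ (which in the paper rests on external results, and whose justification via $|\mathrm{Aut}(G)|\leq |G|$ would anyway require knowing $G$ is countable). You are also more careful than the paper in writing $s=v+t$ and treating the case $s=0$. One small point to patch: your specific witness $u=1+q^k$ fails when the $q$-component of $t$ is trivial ($k=0$) and $q=2$, since $2$ is not a $2$-adic unit; in that degenerate case any nonidentity unit (e.g.\ $-1$) serves, so the gap is cosmetic. The final reduction of the torsion-free part to dimension one is the same in both arguments.
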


\begin{proof}
By a well-know theorem on divisible abelian groups, $G=\mathbb{Q}^I\oplus \sum_{p\in J}\oplus\mathbb{Z}_{p^{\infty}}$, where $I$ is a set and $J$ is a set of primes. Let $J\neq \emptyset$. Then for some prime $p$ the quasi-cyclic group $\mathbb{Z}_{p^{\infty}}$ is a direct summand of $G$ and hence $\mathrm{Aut}(\mathbb{Z}_{p^{\infty}})$ embeds in $\mathrm{Aut}(G)$. But by \cite{Gio2}, the cardinality of the automorphism group of the quasi-cyclic group is uncountable while $|\mathrm{Aut}(G)|\leq |G|$ is countable. Therefore, $J=\emptyset$ and so $G= \mathbb{Q}^I$. Let $|I|>1$. Then for any $0\neq s\in \mathbb{Q}^I$, we can construct a non-trivial automorphism $\alpha\in \mathrm{GL}_I(\mathbb{Q})$ such that $\alpha(s)=s$. But, this contradicts the assumption of logical cyclicty of $G$.
\end{proof}

Finally, we give a characterization of logically cyclic torsion-free groups. In the proof, we use the well-known quantifier elimination property of divisible groups, which says that in such a group, every first order formula is equivalent to a quantifier-free one.

\begin{theorem}
Let $G=\mathrm{def}_s(G)$ be a torsion-free logically cyclic group. Then $G$ embeds in $(\mathbb{Q}, +)$.
\end{theorem}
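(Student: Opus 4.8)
The plan is to show that $G$ has torsion-free rank at most one; an abelian group with this property embeds in $(\mathbb{Q},+)$ via the map $g\mapsto q$, where $g=qs$ in the divisible hull. So fix the divisible hull $D=\mathbb{Q}\otimes G\supseteq G$, a $\mathbb{Q}$-vector space, and recall that $s$ has infinite order (Fact 4 of this section), so $\mathbb{Q}s$ is a genuine line in $D$. It suffices to prove $G\subseteq \mathbb{Q}s$. Suppose not: there is some $g\in G$ that is $\mathbb{Q}$-independent from $s$ (equivalently, $mg=ns$ forces $m=n=0$). Since $G=\mathrm{def}_s(G)$, Corollary 1.2 tells us that $g$ must be fixed by every $s$-fixing automorphism of every elementary extension of $G$; the strategy is to contradict this by manufacturing such an automorphism that moves $g$.

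First I would pass to a sufficiently saturated, strongly homogeneous elementary extension $\mathfrak{C}\succeq G$ (a monster model of $\mathrm{Th}(G)$). The idea is to perturb $g$ by an element that is invisible to the parameter $s$. Consider the partial type $\pi(x)$ asserting that $x$ is nonzero, divisible by every nonzero integer, and $\mathbb{Q}$-independent from $\{s,g\}$. Each finite fragment of $\pi$ (divisibility by a fixed $N$, nonzero, together with finitely many independence inequalities) is already satisfiable in $G$: one chooses an element of $NG$ avoiding the finitely many forbidden values, which is possible because $NG$ is infinite. Hence $\pi$ is consistent with $\mathrm{Th}(G)$ and, by saturation, is realized by some $h\in\mathfrak{C}$. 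Set $g'=g+h$; then $g'\neq g$, and I claim $g'$ realizes $\mathrm{tp}(g/s)$. Note that such an $h$ cannot exist in $G$ itself when $\mathrm{rank}(G)=2$, which is exactly why the passage to $\mathfrak{C}$ is unavoidable.

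Verifying $\mathrm{tp}(g'/s)=\mathrm{tp}(g/s)$ is the heart of the argument, and this is where the quantifier-elimination cited in the statement enters. By the elimination available for abelian groups (of which the divisible case is the clean prototype alluded to), the type of an element over the single parameter $s$ is determined by which linear equations $ax=bs$ and which divisibility conditions $N\mid(ax+bs)$ it satisfies. Because $\{g,s,h\}$ is $\mathbb{Q}$-independent, $g'$ satisfies exactly the same (namely, no) nontrivial linear relation with $s$ as $g$ does; because $h$ is divisible by every integer, $ah$ is divisible by $N$ for all $a,N$, so $N\mid(ag'+bs)\iff N\mid(ag+bs)$. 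Thus all the relevant conditions agree and the two types coincide. By homogeneity of $\mathfrak{C}$ there is then an automorphism $\alpha$ of $\mathfrak{C}$ with $\alpha(s)=s$ and $\alpha(g)=g'\neq g$, contradicting Corollary 1.2. Therefore every $g\in G$ lies in $\mathbb{Q}s$, and $G$ embeds in $(\mathbb{Q},+)$.

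I expect the main obstacle to be precisely this type-equality step, together with the reason it forces the detour through a saturated extension. The divisible hull $D$ is the natural home for the linear part of the reasoning, but $D$ is \emph{not} an elementary extension of $G$ (divisibility is first-order and fails in $G$ while holding in $D$), so Svenonius' criterion cannot be applied inside $D$ directly. The perturbing element $h$ must simultaneously be nonzero (to move $g$), divisible (to preserve every divisibility condition over $s$), and independent from $\{s,g\}$ (to keep $g'$ independent from $s$, so the linear conditions still match). Arranging all three at once is only possible after passing to $\mathfrak{C}$, and confirming that they genuinely yield $\mathrm{tp}(g'/s)=\mathrm{tp}(g/s)$ is the technical crux, discharged by the quantifier-elimination invoked in the statement.
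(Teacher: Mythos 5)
Your proof is correct, but it takes a genuinely different route from the paper's. The paper works with the divisible envelope $G^{\ast}$ of $G$: it transfers the formula defining each element of $G$ into $G^{\ast}$ using the full quantifier elimination of divisible abelian groups, concludes that $G^{\ast}$ is itself logically cyclic with the same generator $s$, and then invokes Proposition 3.2 to conclude $G^{\ast}=\mathbb{Q}$. You instead prove directly that $G$ has torsion-free rank one: assuming some $g\in G$ is $\mathbb{Q}$-independent of $s$, you realize in a monster model a nonzero, fully divisible perturbation $h$ independent of $\{s,g\}$, check via the Szmielew/Baur--Monk invariants (equations $ax=bs$ and divisibilities $N\mid(ax+bs)$) that $g$ and $g+h$ have the same type over $s$, and then use strong homogeneity together with Corollary 1.2 to contradict the $s$-definability of $g$; all the verifications (finite satisfiability of your partial type in $NG$, matching of equations via independence, matching of divisibilities via divisibility of $h$) go through. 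Your version needs heavier model-theoretic machinery --- saturation, homogeneity, and the pp-elimination for arbitrary abelian groups rather than the clean atomic elimination for divisible ones --- and it bypasses Proposition 3.2 entirely, so it does not yield the paper's intermediate conclusion that the divisible hull is logically cyclic. What it buys in exchange is that every automorphism you construct lives in a genuine elementary extension of $G$, which is exactly where Corollary 1.2 applies; the paper's argument instead evaluates the formula $\varphi(x)$, which defines $v$ in $G$, inside $G^{\ast}$, and since $G^{\ast}$ is not an elementary extension of $G$ this transfer requires some care --- a point your write-up explicitly and correctly identifies as the reason for the detour through the saturated model. One very minor omission: dispose of the trivial group separately, so that $NG$ is indeed infinite and $s\neq 0$; this is immediate.
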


\begin{proof}
Let $G^{\ast}$ be the divisible envelope of $G$, so $G$ is an essential subgroup of $G^{\ast}$, i.e. for any $0\neq u\in G^{\ast}$ the intersection $G\cap \langle u\rangle$ is non-trivial. We prove that $G^{\ast}$ is also logically cyclic and $s$ is its logical generator. Suppose $0\neq u\in G^{\ast}$. There is a non-zero integer $m$ such that $0\neq mu=v\in G$. Let $\varphi(x)$ be a formula which defines $v$ in $G$. Since $G^{\ast}$ is divisible, so it has the quantifier elimination property. Hence in $G^{\ast}$, the formula $\varphi(x)$ is equivalent to a quantifier-free formula $\psi(x, {\bf a})$, where ${\bf a}$ is a set of elements of $G^{\ast}$. Note that $\psi(x, {\bf a})$ is a Boolean combination of atomic formulae of the form $m_{ij}x=a_{ij}$ with $m_{ij}\in \mathbb{Z}$ and $a_{ij}\in G^{\ast}$, so
$$
\psi(x, {\bf a})\equiv \bigvee_{i=1}^p\bigwedge_{j=1}^{q_i}(m_{ij}x=a_{ij})^{\pm},
$$
where $\pm$ indicates an atomic formula or a negation of an atomic formula. Since $v$ is a solution $\psi(x, {\bf a})$, so there is an index $i$ such that we have
$$
\bigwedge_{j=1}^{q_i}(m_{ij}v=a_{ij})^{\pm}.
$$
If all conjunctives in the recent formula are negative, then there will be infinitely many solutions for it in $G$, which is not the case. So, there is a $j$ such that $m_{ij}v=a_{ij}$. Since $G$ is torsion-free, so we conclude that $v$ is defined by $m_{ij}v=a_{ij}$ in $G^{\ast}$, i.e.
$$
\varphi(x)\equiv (m_{ij}v=a_{ij}).
$$
Now, consider the following formula in the language of groups with parameter $s$,
$$
\forall x\ (\varphi(x)\Rightarrow my=x).
$$
Clearly, this formula, defines $u$ in $G^{\ast}$ and so, $G^{\ast}$ is logically cyclic. By the previous proposition, $G^{\ast}=\mathbb{Q}$, and the proof completes.
\end{proof}

One more problem remains unsolved;  the classification of logically cyclic  algebraic structures
other than groups. An algebra $A$ in an algebraic language
$\mathcal{L}$, is said to be cyclic if it is  generated by a single
element. It is called logically cyclic if every element of $A$ can
be defined by a first order formula containing a fixed parameter
from $A$. If $A$ is finite, then clearly $A$ is the only elementary
extension of itself. So an element $a\in A$ is definable using a
parameter $s$, if and only if every automorphism of $A$ which fixes
$s$, fixes already $a$. How can we obtain the relation between
cyclic and logically cyclic algebras? This may need further efforts
because in general we have a few information about
$\mathrm{Aut}(A)$.\\

{\bf Acknowledgement} The author would like to thank J. S. Eyvazloo,  G. Robinson and K. Bou-Rabee for their comments and suggestions.

\end{document}